\newtheorem{assum}{Assumption}
\newtheorem{defi}{Definition}
\newtheorem{prop}{Proposition}
\newtheorem{coro}{Corollary}
\newtheorem{lem}{Lemma}
\newtheorem{rem}{Remark}
\def\ds{\displaystyle}
\title{An optimal feedback control that minimizes the epidemic peak in the SIR model under a budget constraint}
\author{{\sc Emilio Molina$^{1,2}$ and Alain Rapaport$^2$}\\[2mm]
	$^1$ DIM \& CMM, Santiago-de-Chile, Universidad de Chile\\
	and LJLL, Sorbonne Universit\'e, INRIA, France\\[2mm]
	$^2$ MISTEA, Universit\'e Montpellier, INRAE, Institut Agro, France\\[2mm]
	{\em E-mail addresses:} {\tt emolina@dim.uchile.cl}, {\tt alain.rapaport@inrae.fr}
}
\date{\today}
\begin{document}
	
	\maketitle
	
	\begin{abstract}
		We give the explicit solution of the optimal control problem which consists in minimizing the epidemic peak in the SIR model when the control is an attenuation factor of the infectious rate, subject to a $L^1$ budget constraint. The optimal strategy is given as a feedback control which consists in an singular arc maintaining the infected population at a constant level until the immunity threshold is reached, and no intervention outside the singular arc.\\
		
		\noindent {\bf Key-words.} Optimal control, maximum cost, feedback control, epidemiology, SIR model.
	\end{abstract}
	
	\section{Introduction}
	
	Since the pioneer work of Kermack and McKendrick \cite{KermackMcKendrick}, the SIR model has been very popular in epidemiology, as the basic model for infectious diseases with direct transmission (see for instance \cite{SIR,Li} as introductions on the subject). It retakes great importance nowadays due to the recent coronavirus pandemic. In face of a new pathogen, non-pharmaceutical interventions (such as reducing physical distance in the population) are often the first available means to reduce the propagation of the disease, but this has economical and social prices... In \cite{Morris,Lobry}, the authors underline the need of control strategies for epidemic mitigation by "flattering the epidemic curve", rather than eradication of the disease that might be too costly. Several works have applied the optimal control theory considering interventions as a control variable that reduces the effective transmission rate of the SIR model, and studied optimal strategies with criteria based on running and terminal cost over fixed finite interval or infinite horizon \cite{Behncke,Bolzoni2017,Bolzoni2019,KantnerKoprucki,PalmerZabinskuLiu,Bliman20,Caulkins,Freddi,Ketcheson,Bliman21}. However, the highest peak of the epidemic appears to be the highly relevant criterion to be minimized (especially when there is an hospital pressure to save individuals with severe forms of the infection). In \cite{Morris}, the authors studied the minimization of the peak of the infected population under the constraint that interventions occur on a single time interval of given duration. In the present work, we consider the same criterion, but under a budget constraint on the control (as an integral cost) that we believe to be more relevant as it takes into account the strength of the interventions and does not impose an {\em a priori} single time interval of given length for the interventions to take place (although we have been able to prove that the optimal solution consists indeed in having interventions on a single time interval but with a control strategy different that the one obtained in \cite{Morris}). Let us also mention a more recent work \cite{AvramFreddiGoreac} that considers a kind of "dual" problem, which consists in minimizing an integral cost of the control under the constraint that the epidemic stays below a prescribed value and an additional constraint on the state at a fixed time. The structure of the optimal strategy given by the authors in \cite{AvramFreddiGoreac} is similar to the one we obtained without having to fix a time horizon and a terminal constraint. All the cited works rely on numerical methods to provide the effective control. Here, we give an explicit analytical expression of the optimal control.

	Let us stress that optimal control problems with maximum cost are not in the usual  Mayer, Lagrange or Bolza forms of the optimal control theory \cite{Cesari}, for which the necessary optimality conditions of Pontryagin's Principle apply, but fall into the class of optimal control with $L^\infty$ criterion, for which characterizations  have been proposed in the literature mainly in terms of the value function (see for instance \cite{BarronIshii}).
	Although necessary optimality conditions and numerical procedures have been derived from theses characterizations (see for instance \cite{Barron,DiMarcoGonzalez1999}), these approaches remain quite difficult and numerically heavy to be applied on concrete problems. On another hand, for minimal time problems with planar dynamics linear with respect to the control variable, comparison tools based on the application of the Green's Theorem have shown that it is possible to dispense with the use of necessary conditions to prove the optimality of a candidate solution \cite{HermesLaSalle}. Although our criterion is of different nature, we show in the present work that it is also possible to implement this approach for our problem.
	
	The paper is organized as follows. In the next section, we posit the problem of peak minimization to be  studied. In Section \ref{sectionNSN}, we define a class of feedback strategies that we called "NSN", and give some preliminary properties. Section \ref{sectionProof} proves that the existence of an NSN strategy which is optimal for our problem, and makes it explicit. Finally, Section \ref{sectionSimu} illustrates the optimal solutions on numerical simulations and discusses about the optimal strategy.
	
	\section{Definitions and problem statement}
	\label{sectionDef}
	
	We consider the SIR model
	\begin{equation}
	\label{model}
	\left\{\begin{array}{l}
	\dot S = - \beta SI(1-u)\\
	\dot I = \beta SI(1-u)-\gamma I\\
	\dot R = \gamma I
	\end{array}\right.
	\end{equation}
	where $S$, $I$ and $R$ denotes respectively the proportion of susceptible, infected and recovered individuals in a population of constant size. The parameters $\beta$ and $\gamma$ are the transmission and recovery rates of the disease. The control $u$, which belongs to $U:=[0,1]$, represents the efforts of interventions by reducing the effective transmission rate. For simplicity, we shall drop in the following the $R$ dynamics.
   Throughout the paper, we shall assume that the {\em basic reproduction number} ${\cal R}_0$ is larger than one, so that an epidemic outbreak may occur.
	\begin{assum}
		\label{assumR0}
		\[
		{\cal R}_0:= \frac{\beta}{\gamma} > 1
		\]
	\end{assum}
	For a positive initial condition $(S(0),I(0))=(S_0,I_0)$ with $S_0+I_0\leq 1$, we consider the optimal control problem which consists in minimizing the epidemic peak under a budget constraint
	\begin{equation}
		\label{pbpeak}
	\inf_{u(\cdot) \in {\cal U}} \max_{t \geq 0} I(t)
	\end{equation}
	where ${\cal U}$ denotes the set of measurable functions $u(\cdot)$ that take values in $U$ and satisfy the $L^1$ constraint
	\[
	\int_0^{+\infty} u(t) dt \leq Q
	\]
	\begin{rem}
	From equations \eqref{model}, one can easily check that the solution $I(t)$ tends to zero when $t$ to $+\infty$ whatever is the control $u(\cdot)$, so that the supremum of $I(\cdot)$ over $[0,+\infty)$ in \eqref{pbpeak} is reached.
	\end{rem}

Equivalently, one can consider the extended dynamics.
	\begin{equation}
		\label{sys}
		\left\{\begin{array}{l}
			\dot S = - \beta SI(1-u)\\
			\dot I = \beta SI(1-u)-\gamma I\\
			\dot C= -u
		\end{array}\right.
	\end{equation}
with the initial condition $(S(0),I(0),C(0))=(S_0,I_0,Q)$ and the state constraint
\begin{equation}
	\label{target}
C(t)\geq 0, \quad t \geq 0
\end{equation}
A solution of \eqref{sys} is {\em admissible} if the control $u(\cdot)$ takes its values in $U$ and the condition \eqref{target} is fulfilled.

\section{The NSN feedback}
\label{sectionNSN}

	Let us denote the {\em immunity threshold}
	\[
	S_{h}:={\cal R}_0^{-1}=\frac{\gamma}{\beta} < 1
	\]
	Note that $S(\cdot)$ is a non increasing function and that one has  $\dot I\leq 0$ when $S \leq S_{h}$, whatever is the control. If $S_0\leq S_{h}$, the maximum of $I(\cdot)$ is thus equal to $I_0$ for any control $u(\cdot)$, which solves the optimal control problem. We shall now consider that the non-trivial case.
	\begin{assum}
		\label{assumS0}
		\[
		S_0 > S_{h}
		\]
	\end{assum}
Under this assumption, we thus know that for any admissible solution, the maximum of $I(\cdot)$ is reached for $S\geq S_{h}$.  For the control $u=0$, one can easily check that following property is fulfilled
	\begin{equation}
	    \label{invariant}
	S(t)+I(t)-S_h\log(S(t))=S_0+I_0-S_h\log(S_0), \quad t >0
	\end{equation}
	and the maximum of $I(\cdot)$ is then reached for the value
	\[
	I_{h}:= I_0 + S_0 -S_{h}-S_{h}\log\left(\frac{S_0}{S_{h}}\right)
\]
	We define the "NSN" (for null-singular-null) strategy as follows.
	\begin{defi}
	For $\bar I \in [I_0,I_{h}]$, consider the feedback control
	\begin{equation}
	\label{feedback}
	\psi_{\bar I}(I,S):=\begin{cases}
	1- \frac{S_{h}}{S} & \mbox{if } I=\bar I \mbox{ and  } S> S_{h}\\0 & \mbox{otherwise}
	\end{cases}
	\end{equation}
	We denote the $L^1$ norm associated to the NSN control
\[
{\cal L}(\bar I):= \int_0^{+\infty} u^{\psi_{\bar I}}(t)dt, \quad \bar I \in [I_0,I_h]
\]
where $u^{\psi_{\bar I}}(\cdot)$ is the control generated by the feedback \eqref{feedback}.
\end{defi}
This control strategy consists in three phases:
\begin{enumerate}
	\item no intervention until the prevalence $I$ reaches $\bar I$ (null control),
	\item maintain the prevalence $I$ equal to $\bar I$ until $S$ reaches $S_h$ (singular control),
	\item no longer intervention when $S>S_h$ (null control)
\end{enumerate}
\begin{rem}
	\label{remswitch}
	There is no switch of the control between phases 2 and 3, because $u(t)$ tends to $zero$ when $S(t)$ tends to  $S_h$, according to expression \eqref{feedback}.
\end{rem}

\bigskip

One can check straightforwardly the the following properties are fulfilled.

 	\begin{lem}
 		\label{lemIbar}
 	For any $\bar I \in [I_0,_h]$, the maximal value of the control $u^{\psi_{\bar I}}(\cdot)$ is given by
 	\[
 	u_{max}(\bar I):=1-\frac{S_h}{\bar S} < 1
 	\]
 	where $\bar S$ is solution of
 	\[
 	\bar S - S_h\log \bar S = S_0 + I_0 -S_h \log S_0 - \bar I
 	\]
 	Moreover, any solution given by the NSN strategy verifies
   \[
 	\max_{ t \geq 0} I(t) = \bar I
 	\]
 \end{lem}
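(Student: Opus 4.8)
The plan is to follow the trajectory generated by the feedback $\psi_{\bar I}$ through its three phases and read off both claimed quantities directly. First, on the initial null-control phase I would use the conservation law \eqref{invariant}, which holds precisely because $\beta S_h=\gamma$, so that $\frac{d}{dt}[S+I-S_h\log S]=-\gamma I+\beta S_h I=0$. Since $S_0>S_h$ by Assumption \ref{assumS0} and $\dot I=\beta I(S-S_h)>0$ as long as $S>S_h$, the prevalence $I$ increases strictly and monotonically from $I_0$ while $S$ decreases, so the level $I=\bar I$ is reached at a well-defined value $S=\bar S$. Evaluating the invariant at that point gives $\bar S-S_h\log\bar S=S_0+I_0-S_h\log S_0-\bar I$, which is exactly the defining equation for $\bar S$.

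To confirm $\bar S$ is well-defined and located correctly, I would study $g(S):=S-S_h\log S$, whose derivative $g'(S)=1-S_h/S$ is positive on $(S_h,+\infty)$; hence $g$ is strictly increasing there. As $\bar I$ ranges over $[I_0,I_h]$, the right-hand side $g(\bar S)$ decreases from $g(S_0)$ down to $g(S_h)$, the latter being a direct computation using the definition of $I_h$. Thus $\bar S$ exists and is unique on the branch $S>S_h$, equals $S_0$ when $\bar I=I_0$ and equals $S_h$ when $\bar I=I_h$, so that $\bar S\in(S_h,S_0]$ for $\bar I<I_h$.

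Next, on the singular phase the feedback prescribes $u=1-S_h/S$, which is exactly the value making $\dot I=\beta SI(1-u)-\gamma I=I(\beta S_h-\gamma)$ vanish, so $I\equiv\bar I$ is maintained. Here $\dot S=-\beta SI(1-u)=-\gamma\bar I<0$ is constant, hence $S$ decreases linearly and reaches $S_h$ in finite time; along the way $u=1-S_h/S$ is increasing in $S$, so its maximum over this phase is attained at the entry point $S=\bar S$ and equals $1-S_h/\bar S$. Since the control vanishes on the two null phases, the global maximum of $u^{\psi_{\bar I}}(\cdot)$ is $u_{max}(\bar I)=1-S_h/\bar S$, and since $S_h>0$ with $\bar S$ finite this gives $u_{max}(\bar I)<1$.

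Finally, for the peak I would combine the phases: $I$ increases to $\bar I$ on phase 1, stays equal to $\bar I$ on phase 2, and on the terminal null phase $S\le S_h$ yields $\dot I=\beta I(S-S_h)\le 0$, so $I$ only decreases; hence $\max_{t\ge 0}I(t)=\bar I$. The computations are elementary; the only points requiring genuine care are the monotonicity argument that pins down $\bar S\in(S_h,S_0]$ (ensuring the singular arc is nonempty and the control maximum is read at its entry) and the two degenerate endpoints $\bar I=I_0$ (empty phase 1) and $\bar I=I_h$ (empty phase 2), both of which are covered consistently by the same formulas.
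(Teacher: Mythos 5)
Your proof is correct, and it matches what the paper intends: the lemma is stated there without proof ("one can check straightforwardly"), and your verification — the conserved quantity \eqref{invariant} on the first null arc to identify $\bar S$, the computation $\dot I=0$, $\dot S=-\gamma\bar I$ on the singular arc with $u=1-S_h/S$ monotone in $S$, and $\dot I\leq 0$ once $S\leq S_h$ — is precisely the intended routine argument, also consistent with the parameterization used later in the proof of Proposition \ref{propcalC}. Your attention to the endpoint cases $\bar I=I_0$ and $\bar I=I_h$ and to the uniqueness of $\bar S$ via monotonicity of $g(S)=S-S_h\log S$ on $(S_h,+\infty)$ is a welcome addition beyond what the paper records.
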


\section{Optimal strategy}
\label{sectionProof}

We first show that the function ${\cal L}$ can be made explicit.
	
	\begin{prop} 
		\label{propcalC}
		One has
		\begin{equation}
			\label{calL}
		{\cal L}(\bar I)=\frac{I_h-\bar I}{\beta S_h\bar I}, \quad \bar I \in [I_0,I_h]
		\end{equation}
\end{prop}

		\begin{proof}
	Note first that whatever is $\bar I$, $S(\cdot)$ is decreasing with the control \eqref{feedback}. One can then equivalently parameterize the solution $I(\cdot)$, $C(\cdot)$ by 
	\[
	\sigma(t):=S_0-S(t)
	\]
	instead of $t$. Posit $\sigma_h:=\sigma(t_h)=S_0-S_h$.
	
	\medskip

	As long as $I<\bar I$, one has $u=0$ which gives
\[
\left\{\begin{array}{l}
\ds \frac{dI}{d\sigma}=f(\sigma):=1-\frac{S_{h}}{S_0-\sigma}>0\\[3mm]
\ds	\frac{dC}{d\sigma}=0
\end{array}\right.
\]
Remind, from the definition of $I_h$, that the solution $I(\cdot)$ with $u=0$ reaches $I_h$ in finite time. Therefore, one can define the number
\[
\bar \sigma := \inf \{ \sigma \geq 0, \; I(\sigma)=\bar I \} \leq \sigma_h
\]
which verifies
\begin{equation}
	\label{barsigma}
	\int_0^{\bar\sigma} f(\sigma)\,d\sigma=\bar I-I_0
\end{equation}
For $\sigma \in [\bar \sigma,\sigma_h]$, one has $u=1-S_h/S$, that is
\[
\left\{\begin{array}{l}
\ds \frac{dI}{d\sigma}=0\\[3mm]
\ds	\frac{dC}{d\sigma}=-\frac{1}{\beta S_h \bar I}\left(1-\frac{S_{h}}{S_0-\sigma}\right)=-\frac{f(\sigma)}{\beta S_h \bar I}<0
\end{array}\right.
\]
One then obtains
\[
{\cal L}(\bar I)=C(0)-C(\sigma_h)=\frac{1}{\beta S_h \bar I}\int_{\bar \sigma(\bar I)}^{\sigma_h} f(\sigma)\,d\sigma
 \]
 and with \eqref{barsigma} one can write
 \[
 {\cal L}(\bar I)=\frac{1}{\beta S_h \bar I}\left(\int_0^{\sigma_h} f(\sigma)\,d\sigma +I_0-\bar I\right)
 \]
 On another hand, one has
 \[
 \int_0^{\sigma_h} f(\sigma)\,d\sigma=\sigma_h+S_h\log\left(\frac{S_h}{S_0}\right)=I_h-I_0
 \]
 which finally gives the expression \eqref{calL}.
	\end{proof}

\bigskip

Then, the best admissible NSN control can be given as follows. 

\begin{coro}
\label{coro}
	When $Q\leq\frac{I_h-I_0}{\beta S_h I_0}$, the smallest $\bar I \in [I_0,I_h]$ for which the solution with the NSN strategy is admissible is given by the value
	\begin{equation}
	    \label{Ibarstar}
	\bar I^\star(Q):=\frac{I_h}{Q\beta S_h+1}
	\end{equation}
	and one has
	\begin{equation}
	    \label{LIbar}
	{\cal L}(\bar I^\star(Q))=Q
	\end{equation}
\end{coro}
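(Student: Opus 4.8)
The plan is to translate the admissibility requirement into a single scalar condition on ${\cal L}(\bar I)$ and then invert the explicit formula provided by Proposition \ref{propcalC}. First I would observe that along any NSN trajectory the cost variable satisfies $C(t) = Q - \int_0^t u^{\psi_{\bar I}}(s)\,ds$, which is non-increasing and tends to $Q - {\cal L}(\bar I)$ as $t \to +\infty$. Hence the state constraint \eqref{target}, namely $C(t) \geq 0$ for all $t \geq 0$, holds if and only if ${\cal L}(\bar I) \leq Q$. Since Lemma \ref{lemIbar} already guarantees that $u^{\psi_{\bar I}}(\cdot)$ takes its values in $U$ (in fact $u_{max}(\bar I) < 1$), admissibility of the NSN strategy with parameter $\bar I$ is \emph{equivalent} to the single inequality ${\cal L}(\bar I) \leq Q$.

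Next I would exploit the monotonicity of ${\cal L}$. Rewriting \eqref{calL} as ${\cal L}(\bar I) = \frac{1}{\beta S_h}\left(\frac{I_h}{\bar I} - 1\right)$ makes it transparent that ${\cal L}$ is continuous and strictly decreasing on $[I_0, I_h]$, with ${\cal L}(I_h) = 0$ and ${\cal L}(I_0) = \frac{I_h - I_0}{\beta S_h I_0}$. Consequently the admissible set $\{\bar I \in [I_0, I_h] : {\cal L}(\bar I) \leq Q\}$ is an interval whose left endpoint is the unique solution of ${\cal L}(\bar I) = Q$. Solving this equation, i.e. clearing the denominator in $\frac{I_h - \bar I}{\beta S_h \bar I} = Q$ and collecting the terms in $\bar I$, yields the closed form \eqref{Ibarstar}; identity \eqref{LIbar} then holds by construction, and this left endpoint is precisely the smallest admissible $\bar I$ by strict monotonicity.

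Finally I would check that $\bar I^\star(Q)$ genuinely lies in the range $[I_0, I_h]$. The upper bound $\bar I^\star(Q) \leq I_h$ is immediate since $Q \geq 0$ forces $Q\beta S_h + 1 \geq 1$. For the lower bound, the inequality $\bar I^\star(Q) \geq I_0$ rearranges exactly into the hypothesis $Q \leq \frac{I_h - I_0}{\beta S_h I_0}$, which explains why this threshold appears in the statement: it is the condition under which ${\cal L}(I_0) \geq Q$, so that the do-nothing parameter $\bar I = I_0$ is \emph{not} already admissible and the smallest admissible value is the interior solution rather than $I_0$ itself. I do not anticipate any real obstacle here, since the whole argument reduces to the monotone invertibility of the explicit function ${\cal L}$; the only point requiring a little care is the equivalence in the first paragraph between the pointwise state constraint \eqref{target} and the scalar inequality ${\cal L}(\bar I) \leq Q$, which rests on the fact that $C(\cdot)$ is monotone so that its minimum over $[0,+\infty)$ is attained in the limit.
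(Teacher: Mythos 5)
Your proof is correct and follows essentially the same route as the paper, which states the corollary without a separate proof precisely because it is the immediate inversion of the explicit formula \eqref{calL} from Proposition \ref{propcalC}: solving ${\cal L}(\bar I)=Q$ gives \eqref{Ibarstar}, and the hypothesis $Q\leq\frac{I_h-I_0}{\beta S_h I_0}$ is exactly what places $\bar I^\star(Q)$ in $[I_0,I_h]$. Your added care in justifying that admissibility is equivalent to the scalar inequality ${\cal L}(\bar I)\leq Q$ (via monotonicity of $C(\cdot)$ and the bound $u_{max}(\bar I)<1$ from Lemma \ref{lemIbar}) is sound and only makes explicit what the paper leaves implicit.
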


\bigskip

We give now our main result that shows that the NSN strategy is optimal.

\begin{prop}
	\label{propIbar}
Let Assumptions \ref{assumR0} and \ref{assumS0} be fulfilled. Then, the NSN feedback is optimal with
\[
\bar I = 
\begin{cases}
	\bar I^\star(Q), & Q< \frac{I_h-I_0}{\beta S_h I_0}\\
	I_0, & Q \geq \frac{I_h-I_0}{\beta S_h I_0}
\end{cases}
\]
where $\bar I^\star(Q)$ is defined in \eqref{Ibarstar}, and $\bar I$ is the optimal value of problem \eqref{pbpeak}.
\end{prop}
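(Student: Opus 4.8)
The plan is to complement the two facts already established — that the NSN feedback attached to the prescribed $\bar I$ is admissible and produces a peak exactly equal to $\bar I$ (Lemma \ref{lemIbar} and Corollary \ref{coro}) — with a matching \emph{lower bound}: no admissible control can push the peak below the announced value. Since $\mathcal{L}$ is strictly decreasing on $[I_0,I_h]$ (immediate from \eqref{calL}), it suffices to prove the claim that for every $\bar I\in[I_0,I_h)$, any admissible control whose trajectory satisfies $\max_{t\ge 0}I(t)\le\bar I$ obeys $\int_0^{+\infty}u(t)\,dt\ge\mathcal{L}(\bar I)$. Granting this, if $Q<\frac{I_h-I_0}{\beta S_hI_0}=\mathcal{L}(I_0)$ then achieving a peak $\bar I$ forces $\mathcal{L}(\bar I)\le Q=\mathcal{L}(\bar I^\star(Q))$, hence $\bar I\ge\bar I^\star(Q)$ by monotonicity, so $\bar I^\star(Q)$ is optimal; and if $Q\ge\mathcal{L}(I_0)$ the trivial bound $\max_tI(t)\ge I(0)=I_0$ together with the admissible NSN strategy at $\bar I=I_0$ gives optimality of $I_0$.

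The engine of the lower bound is to rewrite the budget as a line integral that does not see the time parameterization. Adding the first two equations of \eqref{sys} gives $\dot S+\dot I=-\gamma I$, that is $dt=-d(S+I)/(\beta S_hI)$ (using $\gamma=\beta S_h$), while the $S$-equation gives $u=1+\dot S/(\beta SI)$. Multiplying, along any trajectory one has the pointwise identity
\[
u\,dt=-\frac{d(S+I)}{\beta S_hI}+\frac{dS}{\beta SI}=:\omega,\qquad \omega=\Big(\tfrac{1}{\beta SI}-\tfrac{1}{\beta S_hI}\Big)dS-\tfrac{1}{\beta S_hI}\,dI .
\]
Hence $\int_0^{+\infty}u\,dt=\int_\Gamma\omega$, where $\Gamma$ is the curve traced by $(S(\cdot),I(\cdot))$ in the plane: the budget depends only on the geometry of $\Gamma$, not on the speed at which it is run. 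A direct computation gives
\[
d\omega=\frac{1}{\beta I^2}\Big(\frac1S-\frac1{S_h}\Big)\,dS\wedge dI ,
\]
which is \emph{non-positive on the half-plane $S\ge S_h$}, the sign that drives the whole argument.

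I would then compare an arbitrary capping trajectory $\Gamma$ with the NSN reference $\Gamma^\star$ attached to $\bar I$. Since $S(\cdot)$ is non-increasing, I restrict to the sub-arc $\Gamma_0$ on which $S\in[S_h,S_0]$ (the discarded tail carries non-negative budget, so this only decreases the integral). A comparison step shows $\Gamma_0$ lies below $\Gamma^\star$: writing $\tau=-S$, every admissible arc obeys $dI/d\tau=1-S_h/(S(1-u))\le 1-S_h/S$, with equality exactly for the null control of phase~1 of $\Gamma^\star$, so the gap $I_{\Gamma^\star}-I_{\Gamma_0}$ is non-decreasing while $\Gamma^\star$ rises, and $I_{\Gamma_0}\le\bar I=I_{\Gamma^\star}$ once $\Gamma^\star$ sits on its singular plateau. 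The two graphs bound a region $D\subset\{S\ge S_h\}$, and applying Green's theorem to $D$ — whose oriented boundary consists of $\Gamma_0$, the phase-1/plateau arc of $\Gamma^\star$, and the vertical segment $V=\{S=S_h\}$ joining their endpoints at heights $I_{fin}\le\bar I$ — yields
\[
\int_{\Gamma_0}\omega=\mathcal{L}(\bar I)+\frac{1}{\beta S_h}\log\frac{\bar I}{I_{fin}}-\iint_D d\omega\ \ge\ \mathcal{L}(\bar I),
\]
because the logarithmic boundary term (coming from $\omega$ restricted to $V$, where $dS=0$) is $\ge 0$ and $-\iint_D d\omega\ge 0$ by the sign of $d\omega$ on $S\ge S_h$. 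This is precisely the claim.

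The step I expect to be most delicate is the endpoint bookkeeping, which is exactly what makes the sign of the area term exploitable. One must first check that $S$ actually reaches $S_h$, so that $\Gamma_0$ and $V$ are well defined: this follows from the finite-budget constraint, since $\log I(t)-\log I_0=\int_0^t(\beta S(1-u)-\gamma)\,ds\to-\infty$ forces $\int_0^{+\infty}\beta(S_h-S)\,ds=+\infty$ after absorbing the finite term $\beta\int_0^{+\infty} Su\,ds\le\beta S_0Q$, which is incompatible with $S_\infty>S_h$. The borderline case $S_\infty=S_h$ (no finite hitting time, $I_{fin}\to 0$) is then handled by running the same Green's-theorem estimate on the truncations $\{S\ge S_n\}$ with $S_n\downarrow S_h$ and passing to the limit, the plateau budget of the truncated NSN tending to $\mathcal{L}(\bar I)$. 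The remaining care concerns the BV/measurable nature of $\Gamma_0$, in particular the vertical drops produced by $u=1$, along which $I$ only decreases: these neither break the ordering $\Gamma_0\le\Gamma^\star$ nor the applicability of Green's theorem to the region between two monotone graphs.
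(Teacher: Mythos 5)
Your proof is correct and follows essentially the same route as the paper: you rewrite the budget as the line integral of the very same 1-form (your $\omega$ is exactly $-dC$ in the paper's notation), establish that admissible trajectories lie below the NSN curve, and apply Green's theorem to the region bounded by the candidate trajectory, the NSN arc and the vertical segment at $S=S_h$, exploiting the sign of $d\omega$ on $\{S\ge S_h\}$. The differences are only presentational — you prove a direct lower bound where the paper argues by contradiction (realizing your vertical segment as an actual $u=1$ trajectory piece), and you treat the hitting of $S=S_h$ and the asymptotic borderline case with more care than the paper, which implicitly assumes a finite hitting time.
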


\begin{proof}
	When $Q \geq \frac{I_h-I_0}{\beta S_h I_0}$, the NSN strategy is admissible and the corresponding solution verifies
	\[
	\max_{t \geq 0} I(t) = I_0
	\]
	which is thus optimal. 
	
	Consider now $Q < \frac{I_h-I_0}{\beta S_h I_0}$. Let $(S^\star(\cdot), I^\star(\cdot),C^\star(\cdot))$ be the solution generated by the NSN strategy with $\bar I=\bar I^\star(Q)$, and denote $u^\star(\cdot)$ the corresponding control.
	Let
	\[
   \bar S:=S^\star(\bar t) \mbox{ where } 	\bar t = \inf \{ t >0, \; I^\star(t)=\bar I \}
	\]
	and
	\[
	t_h^\star := \inf \{ t  > \bar t, \; S^\star(t)=S_h \},
	\]
   We consider in the $(S,I)$ plane the curve
   \[
   {\cal C}^\star:=\{  (S^\star(t),I^\star(t)) ; \; t \in [0,t_h^\star]\}
   \]
   For $S \geq \bar S$, the control \eqref{feedback} is null and a upward normal to ${\cal C}^\star$ is given by the expression
   \[
   \vec n(S,I)=\left[\begin{array}{c}
   \beta SI-\gamma I\\
   \beta SI
   \end{array}\right], \;
   (S,I)\in {\cal C}^\star \mbox{ with } S \in [\bar S,S_0]
   \]
   On another hand, the vector field in the $(S,I)$ plane of any admissible solution is
   \[
   \vec v(S,I,u)=\left[\begin{array}{c}
   -\beta SI(1-u)\\
   \beta SI(1-u)-\gamma I
   \end{array}\right]
   \]
   Then, one has
   \[
   \vec n(S,I).\vec v(S,I,u)=-\beta\gamma SI^2 u \leq 0 ,  \;
   (S,I)\in {\cal C}^\star \mbox{ with } S \in [\bar S,S_0]
   \]
   which shows that any admissible solution is below the curve ${\cal C}^\star$ in the $(S,I)$ plane for $S \in [\bar S,S_0]$. 
   For $S \in [S_h,\bar S]$, the curve ${\cal C}^\star$ is an horizontal line with $I=\bar I$. Therefore, if there exists an admissible solution $(S(\cdot),I(\cdot),C(\cdot))$ with $\max_t I(t)< \bar I$, its trajectory in the $(S,I)$ plane has to be below the curve ${\cal C}^\star$ for any $S \in [S_h,S_0]$. Let
   \[
   t_h :=  \inf \{ t > 0, \; S(t)=S_h \}
   \]
   One has thus $I(t_h) < \bar I$. Define
   \[
   T:=t_h^\star+\frac{1}{\gamma}\log\left(\frac{\bar I}{I(t_h)}\right) > t_h^\star
   \]
   and consider the (non-admissible) solution $(\tilde S(\cdot),\tilde I(\cdot),\tilde C(\cdot))$ of \eqref{sys} on $[0,T]$ defined by the control
   \[
   \tilde u(t)=\begin{cases}
   u^\star(t) , & t \in [0,t_h^\star)\\
   1 , & t \in [t_h^\star,T]
   \end{cases}
   \]
   One can straightforwardly check with equations \eqref{sys} that the solution is
   \[
   (\tilde S(t),\tilde I(t),\tilde C(t))=\begin{cases}
   (S^\star(t),I^\star(t),C^\star(t)), & t \in [0,t_h^\star)\\
   (S_h,\bar I\exp(-\gamma(t-t_h^\star)),C^\star(t_h^\star)+t_h^\star-t), &  t \in [t_h^\star,T]  
   \end{cases}
   \]
   Remind, from Corollary \ref {coro}, that one has $C^\star(t_h^\star)=0$ by equation \eqref{LIbar}).
   Clearly, one has $(\tilde S(T),\tilde I(T))=(S_h,I(t_h))$ and $\tilde C(T)<0$. 
   We consider now in the $(S,I)$ plane the simple closed curve $\Gamma$ which is the concatenation of the trajectory $(\tilde S(\cdot),\tilde I(\cdot))$ on forward time with the trajectory $(S(\cdot),I(\cdot))$ in backward time:
   \[
   \Gamma:= \{  (\tilde S(\tau),\tilde I(\tau)), \; \tau \in [0,T] \} \cup \{  (S(T+t_h-t),I(T+t_h-t)), \; \tau \in [T,T+t_h] \}
   \]
   that is anticlockwise oriented by $\tau \in [0,T+t_h]$.  Then one has
   \[
   \tilde C(T)-C(t_h)=\oint_{\Gamma} dC
   \]
   From equations \eqref{sys}, one gets
   \[
   dC=-\frac{dS}{\beta SI}-dt=-\frac{dS}{\beta SI}+\frac{dS+dI}{\gamma}=\left(1-\frac{S_h}{S}\right)\frac{dS}{\gamma I}+ \frac{dI}{\gamma I}
   \]
   and thus
   \[
   \tilde C(T)-C(t_h)=\oint_{\Gamma} P(S,I)dS+Q(S,I)dI
   \]
   with
   \[
   P(S,I)=\left(1-\frac{S_h}{S}\right)\frac{1}{\gamma I}, \quad Q(S,I)=\frac{1}{\gamma I}
   \]
   By the Green's Theorem, one obtains
   \[
   \tilde C(T)-C(t_h)=\iint_{\cal D} \left(\frac{\partial Q}{\partial S}(S,I)-  \frac{\partial P}{\partial I}(S,I)\right)\,dSdI=\iint_{\cal D}\left(1-\frac{S_h}{S}\right)\frac{1}{\gamma I^2}\,dSdI > 0
   \]
   where ${\cal D}$ is the domain bounded by $\Gamma$ (see Figure \ref{figGreen} as an illustration). This implies $C(t_h)<\tilde C(T)<0$ and thus a contradiction with the admissibility condition \eqref{target} of the solution $(S(\cdot),I(\cdot),C(\cdot))$. We conclude that $(S^\star(\cdot),I^\star(\cdot),C^\star(\cdot))$ is optimal.
\end{proof}

\begin{figure}[!ht]
	\centering
	\includegraphics[scale=0.6]{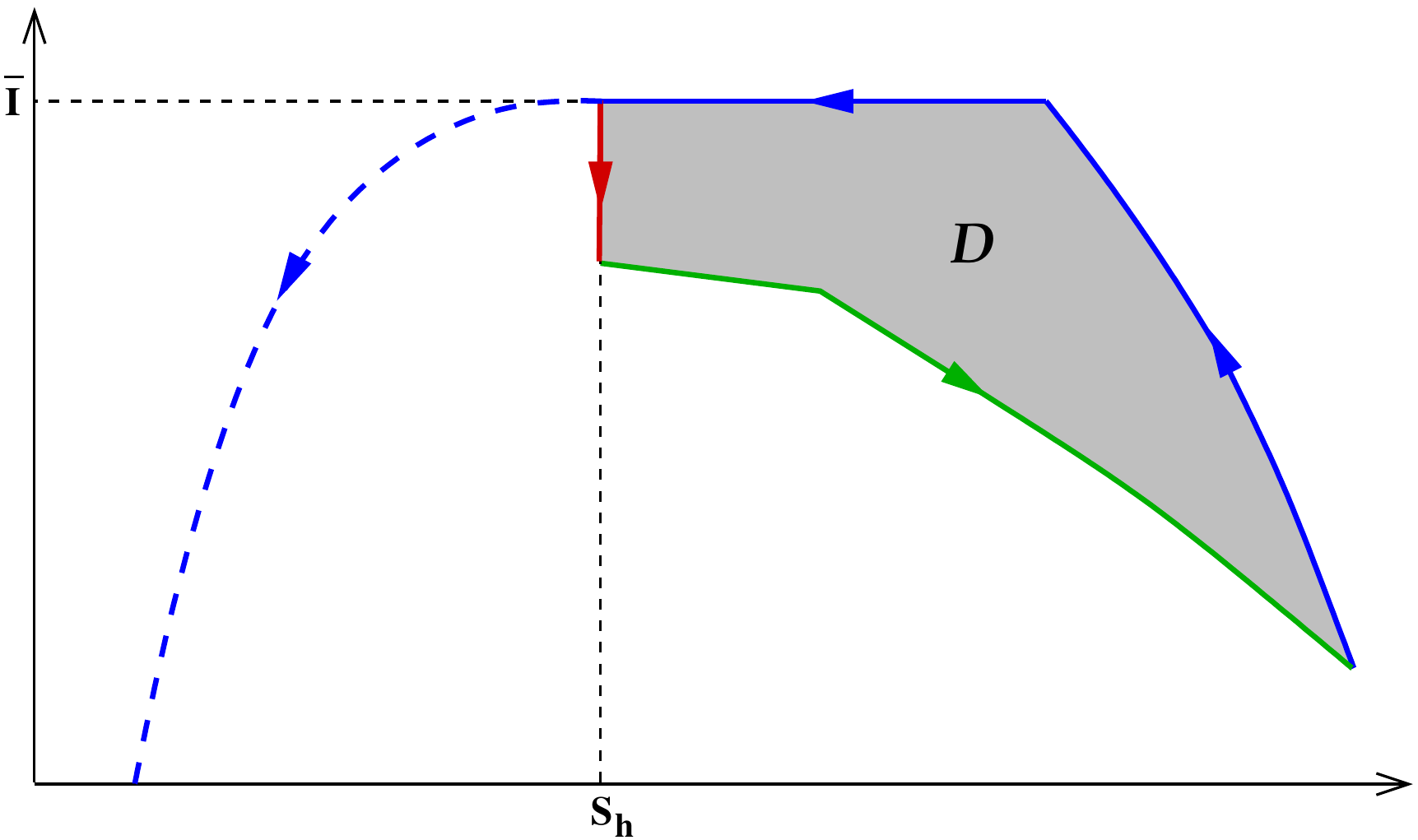}
	\caption{The closed curve $\Gamma$ is composed of the trajectory $(S^\star(\cdot),I^\star(\cdot))$ in blue up to to the point $(S_h,\bar I)$, the additional part $(\tilde S(\cdot),\tilde I(\cdot))$ in red and an hypothetical better trajectory $(S(\cdot),I(\cdot))$ in backward time in green.}
		\label{figGreen}
\end{figure}

\section{Numerical illustrations and discussion}
\label{sectionSimu}

We have considered the same parameters and initial condition as in \cite{Morris} (see Table \ref{tablepar}).
\begin{table}[ht!]
	\begin{center}
		\begin{tabular}{c|c|c|c}
			$\beta$ &  $\gamma$ & $S(0)$ & $I(0)$\\
			\hline\hline
			$\ds \vphantom{\Big(}$
			$0.21$  & $0.07$ &  $1-10^{-6}$ & $10^{-6}$
		\end{tabular}
		\caption{SIR parameters and initial condition}
		\label{tablepar}
	\end{center}
\end{table} 
For these values, one computes
\[
{\cal R}_0=3, \quad S_h=\frac{1}{3}, \quad I_h \simeq 0.3
\]
Figure \ref{figsimu} presents a simulation of the optimal solution for the budget $Q=28$, as an example (the minimum peak is reached for $\bar I \simeq 0.1015$).
\begin{figure}[!ht]
	\centering
	\includegraphics[scale=0.6]{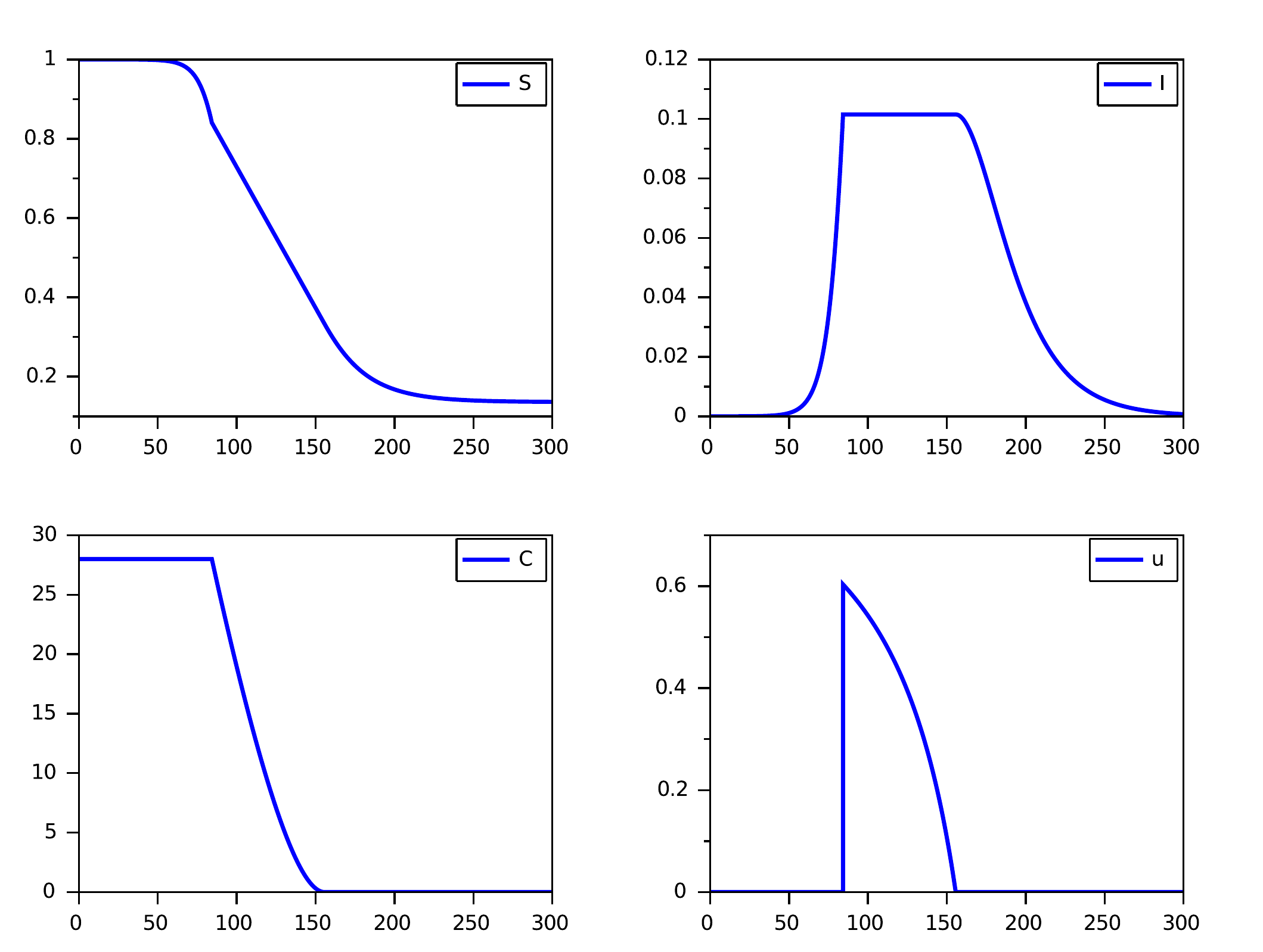}
	\caption{Optimal solution for $Q=28$.}
	\label{figsimu}
\end{figure}
As a comparison, the optimal strategy obtained by Morris et al.~in \cite{Morris} for a fixed time duration of interventions without consideration of any budget is quite different (see Figure \ref{figsame}). It consists in four phases: no intervention, maintain $I$ constant, apply the maximal control (i.e.~$u=1$) and stop the intervention. This control presents thus three switches and relies on a full break of the transmission, differently to the NSN strategy which presents only one switch (see Remark \ref{remswitch}) and does not require a full break (see the maximal value of the control given in Lemma \ref{lemIbar}). 
\begin{figure}[!ht]
	\centering
	\includegraphics[scale=0.6]{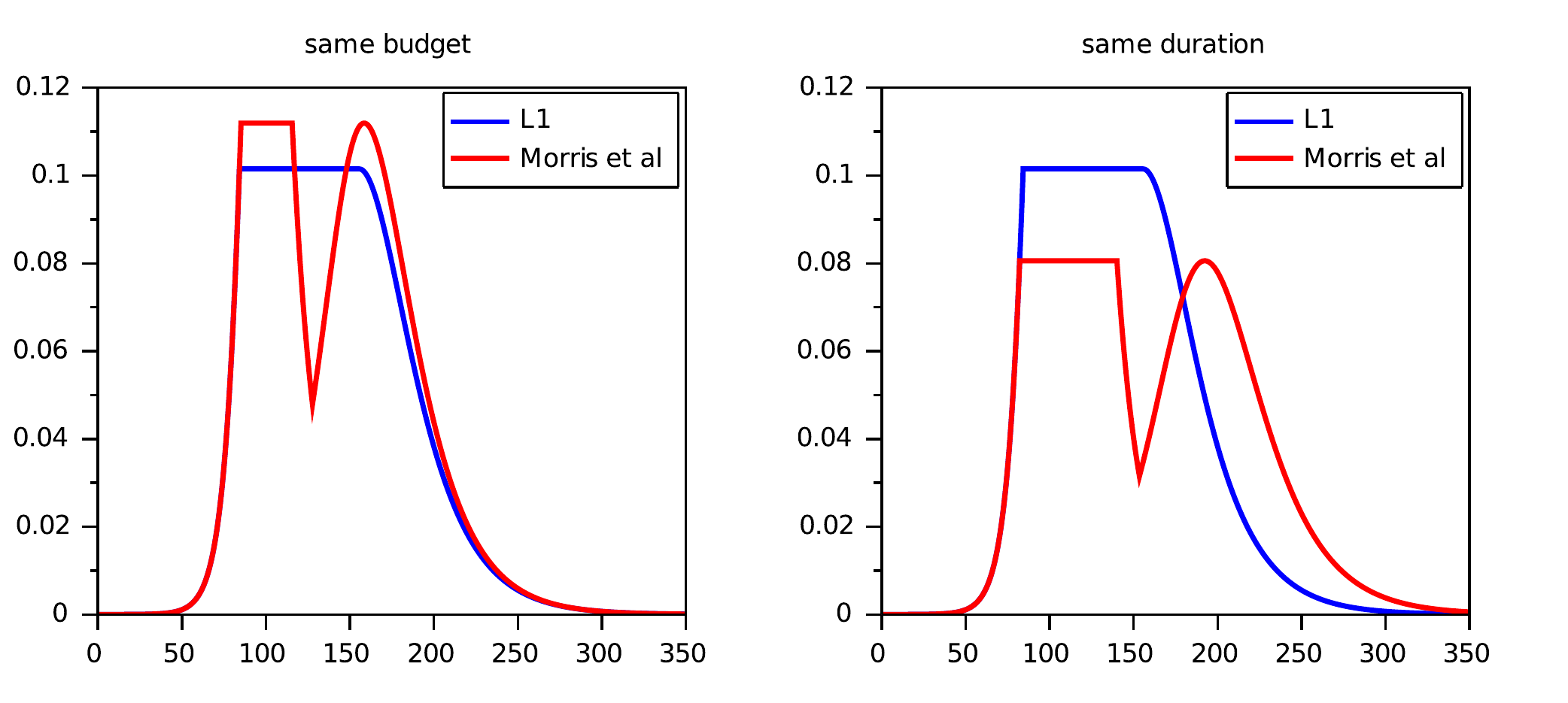}
	\caption{Comparison of the time evolution of the infected population $I$ between the optimal NSN strategy and the optimal one of Morris et al.}
	\label{figsame}
\end{figure}
Applying an NSN strategy appears thus less restrictive to be applied in practice. The strategy proposed by Morris et al.~induces also a second peak: after the third phase, the prevalence $I$ increases again up to a peak which has to be equal to the level maintained during the second phase if its optimally chosen. But this second peak turns out to be non robust under a mischoice (or mistiming) of the second phase (see \cite{Morris} for more details). Comparatively, the NSN is naturally robust with respect to a bad choice of $\bar I$: the maximum value of $I$ is always guaranteed to be equal to $\bar I$. However, a mischoice of $\bar I$ has an impact on the budget of the NSN strategy, given by expression \eqref{calL} and illustrated in Table \ref{tableLIbar} (for model parameters given in Table \ref{tablepar} and $Q=28$).
\begin{table}[ht!]
	\begin{center}
		\begin{tabular}{c|ccccccc}
			$\bar I -\bar I^\star$ &  $-10\%$ & $-5\%$ & $-1\%$ & $+1\%$ & $+5\%$ & $+10\%$\\
			\hline\hline 
			 $\ds \vphantom{\Big(}$
			 ${\cal L}(\bar I)-Q$ &
			$+17\%$  & $+8\%$ &  $+1.5\%$ & $+1.5\%$ & $-7\%$  & $-14\%$
			\end{tabular}
		\caption{Variation of the control budget of the NSN strategy under a mischoice of $\bar I$}
		\label{tableLIbar}
	\end{center}
\end{table} 

In case of a new epidemic among a large population, one can consider that the initial number of infected individuals is very low, while all the remaining population is susceptible. Therefore, one has $S_0+I_0=1$ with $I_0$ very small, and the optimal value of $\bar I$ can be well approximated by its limiting expression for $I_0=0$, that is
\begin{equation}
    \label{Ibarlim}
\bar I_{\ell}:=\frac{1-S_h+S_h\log(S_h)}{Q\beta S_h+1}
\end{equation}
From property \eqref{invariant}, one also gets an approximation of the value $\bar S_{\ell}$ of $S$ when $I$ reaches $\bar I_{\ell}$ with $u=0$, as the solution of the equation
\[
\bar S_{\ell}+\bar I_{\ell}-S_h \log(\bar S_{\ell})=1
\]
and then an approximation of the duration of the intervention is given by
\[
d_{\ell}:=\frac{S_h-\bar S_{\ell}}{\gamma \bar I_{\ell}}
\]
(one can easily check that along the singular arc $I=\bar I$, one has $\dot S=-\gamma \bar I$).
For the parameters of Table \ref{tablepar}, one obtains the limiting values given in Table \ref{tablelim}.
This means that depending on the budget $Q$ only, one can determine the minimal peak and the optimal strategy to apply, without the knowledge of the initial size of the infected population, provided that parameters $\beta$ and $\gamma$ of the disease are known. 
\begin{table}[ht!]
	\begin{center}
		\begin{tabular}{c|c|c}
			$\bar I_{\ell}$ &  $\bar S_{\ell}$ & $d_{\ell}$\\
			\hline\hline
			$\ds \vphantom{\Big(}$
			$0.1015$  & $0.8406$ &  $71.39$
		\end{tabular}
		\caption{The limiting optimal values for arbitrarily small $I_0$ (with $Q=28$)}
		\label{tablelim}
	\end{center}
\end{table} 

\noindent The question of parameters estimation in the SIR model from data is out of the scope of the present work. However, while reaching $I=\bar I_{\ell}$ without intervention, one may expect refinement of the estimates and thus an adjustment of the value of $\bar I_{\ell}$. 

Note that if it is rather the height of the peak $\bar I$ that is imposed, the corresponding effort can be determined with expression \eqref{Ibarlim}, that is
\[
Q=\frac{1}{\beta S_h}\left(\frac{1-S_h}{\bar I}-1\right)
\]
as well with the duration of the intervention.

\medskip

To have a better insight of the impacts of the available budget $Q$ on the course of the epidemic, we have considered four characteristics numbers:
\begin{itemize}
	\item $t_i$: the starting date of the intervention,
	\item $d$: the duration of the intervention,
	\item $\bar I$: the height of the peak,
	\item $u_{max}$: the maximal value of the control,
\end{itemize}
of the optimal solution, depicted on Figure \ref{figQ} as a function of $Q$ for $I_0=10^{-6}$ and $S_0+I_0=1$.
Let us note that the maximal budget $Q$ under which it is not possible to immediately slow down the progress of the epidemic is given, according to Proposition \ref{propIbar}, by
\[
Q_{max}:=\frac{I_h-I_0}{\beta S_h I_0} \simeq 4.3 \, 10^6
\]
which is quite high.
\begin{figure}[!ht]
	\centering
	\includegraphics[scale=0.6]{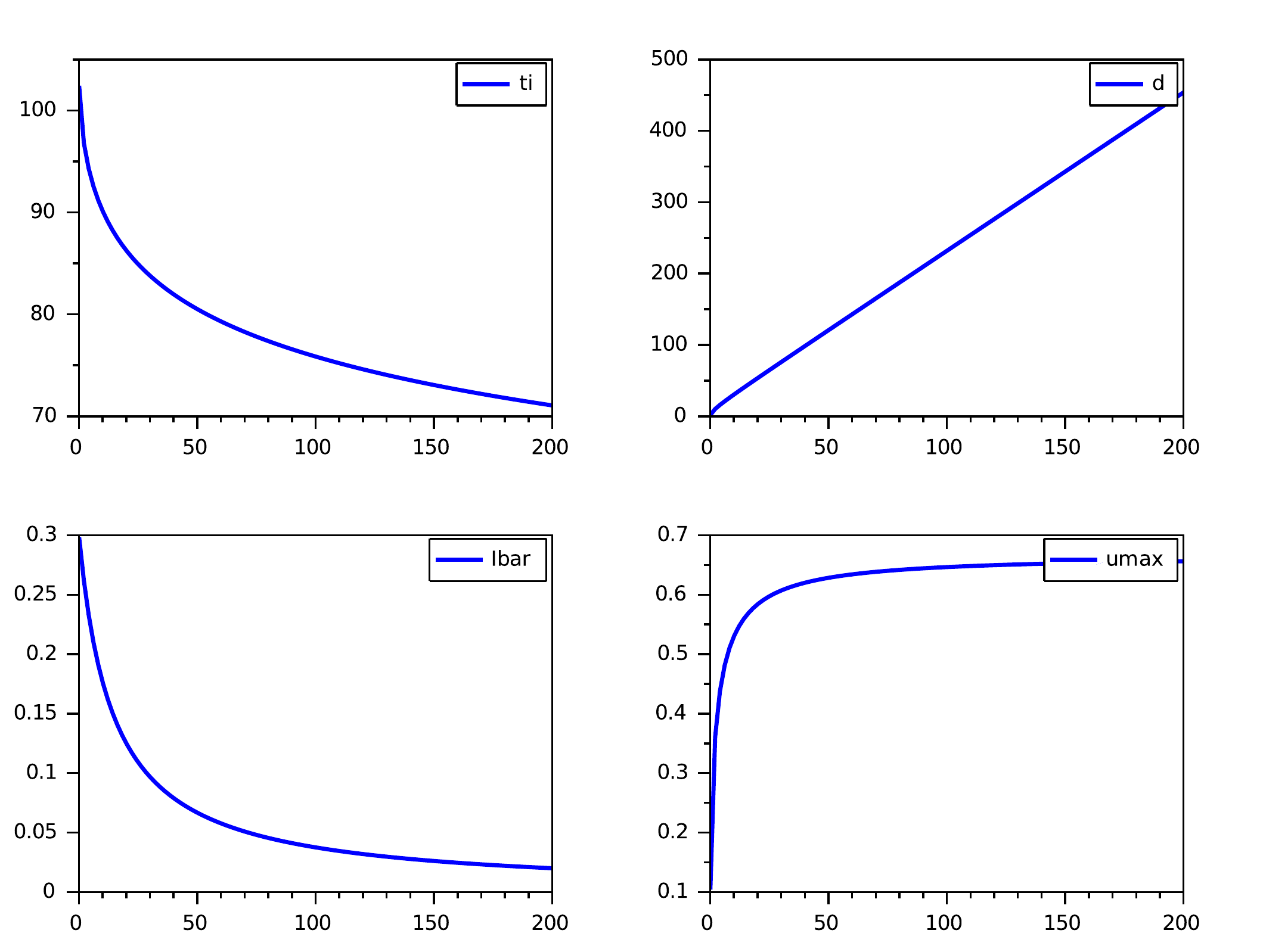}
	\caption{Characteristics numbers as functions of $Q$.}
	\label{figQ}
\end{figure}
Moreover, the maximal value of the control is bounded by the value
\[
u_{max}(\bar I) \leq 1 - S_h = \frac{2}{3}
\]
far from the value $1$ (that would consists in a total lockdown of the population). On Figure \ref{figQ}, one can see that the peak $\bar I$ can be drastically reduced under a reasonable budget, and that taking larger budgets slows down the decrease of the peak, while the duration of the intervention carries on increasing, almost linearly. 
Indeed, remind that one has $d=(\bar S-S_h)/(\gamma\bar I)$ and for an optimal value of $\bar I$, one has $Q=(I_h-\bar I)/(\gamma\bar I)$ from \eqref{Ibarstar}. Then one gets
\[
d=\frac{\bar S-S_h}{I_h - \bar I}\, Q 
\]
but for large values of $Q$, $\bar I$ is small and $\bar S$ closed to one, which gives an approximation of $d$ as the linear function of $Q$
\[
d \simeq \frac{1-S_h}{I_h}\, Q \simeq 2.194 \, Q
\]
This implies that for a long duration, fixing the budget $Q$ or the duration $d$ tends to be equivalent. Therefore, for a same large duration, the optimal peak gets closed from the optimal one of the strategy of Morris et al.~which constraints the duration only, but the difference of the budgets of these two strategies gets increasing with always a lower one for the NSN strategy, as one can see on Figure \ref{figcompar}. 
\begin{figure}[!ht]
	\centering
	\includegraphics[scale=0.6]{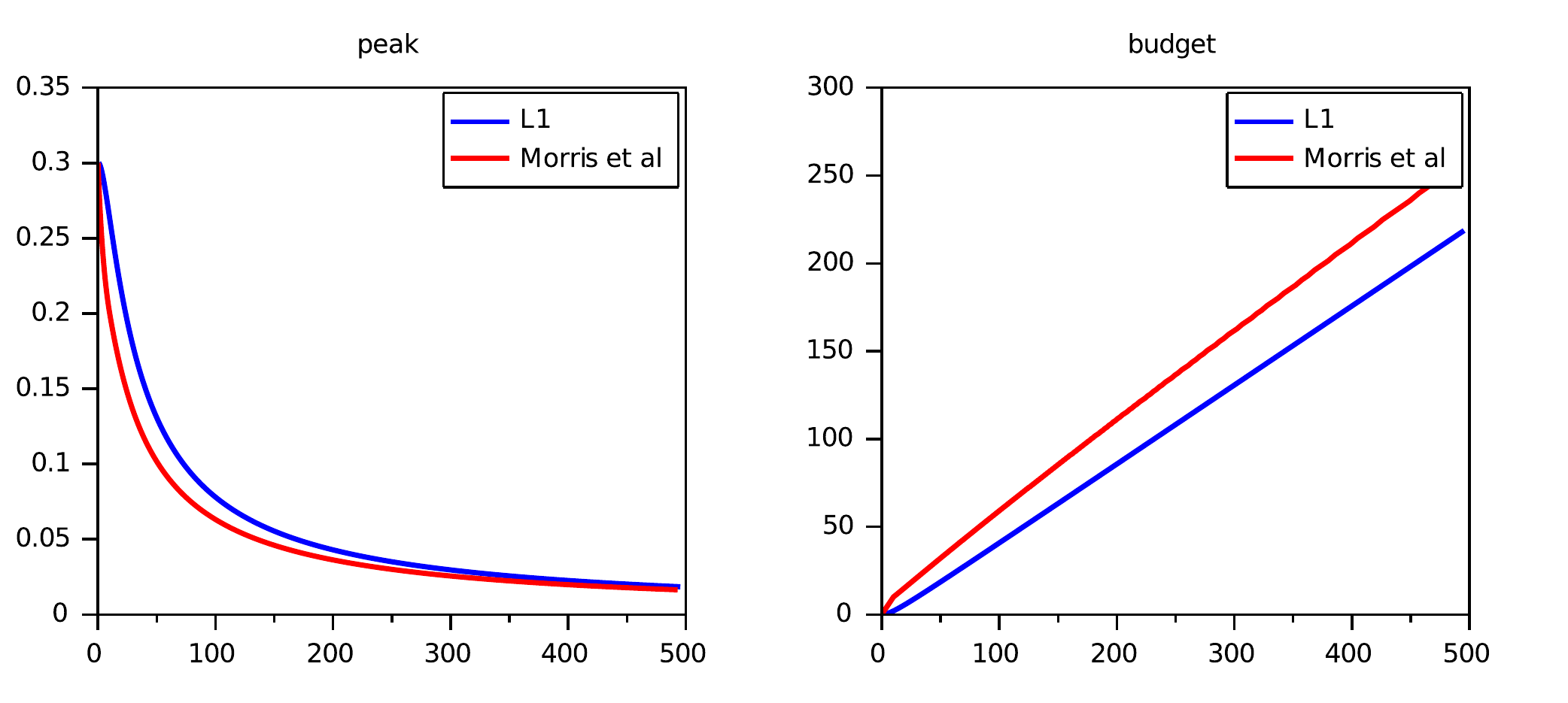}
	\caption{Comparison of the performances of the optimal strategies with same duration (in abscissa).}
	\label{figcompar}
\end{figure}

\medskip

Finally, this analysis highlights (as already mentioned in \cite{Morris,Lobry}) the importance to do not intervene too early (unless one has a very large budget) and to choose the "right" time to launch interventions. We believe that curves as in Figure \ref{figQ} might be of some help for decision makers.

\section*{Acknowledgments}
The authors are very grateful to Professors Claude Lobry and Hector Ramirez for fruitful exchanges.
The first author thanks the support of ANID-PFCHA/Doctorado Nacional/2018-21180348,
FONDECYT grant 1201982 and Centro de Modelamiento Matem\'atico (CMM), ACE210010 and FB210005, BASAL funds for center of excellence, all of them from ANID (Chile).



\begin{thebibliography}{99}
	
	\bibitem{AvramFreddiGoreac}
	{\sc Avram, F., Freddi, L. and Goreac, D.}
	{\em Optimal control of a SIR epidemic with ICU constraints and target objectives.}
	Applied Mathematics and Computation 418, 126816, 2022.

	
	\bibitem{Barron}
	{\sc Barron, E. N.} {\em The Pontryagin maximum principle for minimax problems of optimal
		control.} Nonlinear Analysis: Theory, Methods \& Applications 15(12), 1155--1165, 1990.
	
	\bibitem{BarronIshii}
	{\sc Barron, E.N. and Ishii, H.}
	{\em The Bellman equation for minimizing the maximum cost.}
	Nonlinear Analysis: Theory, Methods \& Applications
	13(9), 1067--1090, 1989.

\bibitem{Behncke}
 {\sc Behncke, H.}
 {\em Optimal control of deterministic epidemics.} Optimal Control, Applications and Methods 21(6), 269--285, 2000.

\bibitem{Bliman20}
{\sc Bliman, P.-A., and Duprez, M.}
{\em How best can finite-time social distancing reduce epidemic final size~?}
Journal of Theoretical Biology 511, 110557, 2020.

\bibitem{Bliman21}
{\sc Bliman, P.-A., Duprez, M., Privat, Y. and Vauchelet, N.}
{\em Optimal Immunity Control and Final Size Minimization by
Social Distancing for the SIR Epidemic Model.}
Journal of Optimization Theory and Applications 189, 408–-436, 2021.

\bibitem{Bolzoni2017}
{\sc Bolzoni, L., Bonacini, E., Soresina, C. and Groppi, M.} 
{\em Time-optimal control strategies in SIR epidemic
models.} Mathematical Biosciences 292, 86–-96, 2017.

\bibitem{Bolzoni2019}
{\sc Bolzoni, L., Bonacini, E., Soresina, C. and Groppi, M.} 
{\em Optimal control of epidemic size and duration
with limited resources.} Mathematical Biosciences 315, 108232, 2019.

	\bibitem{Caulkins}
	{\sc Caulkins, J.P, Grass, D., Feichtinger, G., Hartl, R.F., Kort, P.M., Prskawetz, A., Seidl, A. and Wrzaczek, S.},
    {\em The optimal lockdown intensity for COVID-19.}
    Journal of Mathematical Economics 93, 102489, 2021.
	
	\bibitem{Cesari}
	{\sc Cesari, L.}
	{\em Optimization Theory and Applications. Problems with Ordinary Differential Equations.} Springer-Verlag New-York, 1983.
	
	\bibitem{DiMarcoGonzalez1999}
	{\sc Di Marco, A. and Gonzalez, R. L. V.} {\em Minimax optimal control problems. Numerical analysis of the finite horizon case.} ESAIM: Mathematical Modelling and Numerical Analysis 33(1), 23--54, 1999.
	
	\bibitem{Freddi}
	{\sc Freddi, L.}
	{\em Optimal control of the transmission rate in compartmental epidemics.}
	Mathematical Control \& related Fields 12(1), 201--223, 2022.
	
	\bibitem{HansenDay}
	{\sc Hansen, E. and Day,T.}
	{\em Optimal control of epidemics with limited resources.}
	Journal of Mathematical Biology 62(3), 423--51, 2011.

	\bibitem{HermesLaSalle}
	{\sc Hermes, H. and  and La Salle, J.P.}, 
	{\em Functional Analysis and Time Optimal Control},  Mathematics in Science and Engineering, Vol.~56, Academic Press, New York 1969.
	
	\bibitem{KantnerKoprucki}
	{\sc Kantner, M. and Koprucki, T.}
	{\em Beyond just “flattening the curve”: Optimal control of epidemics with purely non-pharmaceutical interventions.} Journal of Mathematics in Industry 10(23), 2020.
	
	\bibitem{KermackMcKendrick}
	{\sc Kermack, W. and McKendrick, A.} {\em A contribution to the mathematical
		theory of epidemics.}, Proceedings of the Royal Society A115, 700--721, 1927.
	
	\bibitem{Ketcheson}	
	{\sc Ketcheson, D.I.}
	{\em Optimal control of an SIR epidemic through finite-time non-pharmaceutical intervention.} Journal of Mathematical Biology 83(7), 2021.
	
	\bibitem{Li}
	{\sc Li, M.Y.}
	{\em An introduction to mathematical modeling of infectious
		diseases.}
	Mathematics of Planet Earth 2, Springer, 2018.
	
	\bibitem{Lobry}
	{\sc Lobry, C.}
	{\em Qu'est ce que le pic d'une \'epid\'emie et comment le contr\^oler~?} (in french), Vuibert, 2021.
		

	\bibitem{Morris} {\sc Morris, D. , Rossine, F., Plotkin, J. and Levin, S.} {\em Optimal, near-optimal, and robust epidemic control.} Communications Physics 4(78), 2021.
	
	\bibitem{PalmerZabinskuLiu}
	{\sc Palmer, A.Z., Zabinsky, Z.B., and Liu, S.},
	{\em Optimal control of COVID-19 infection rate with social costs.} arXiv preprint 2007.13811, 2020.
	
	\bibitem{SIR}
	{\sc Weiss, H.} {\em The SIR model and the foundations of public health.}
	MATerials MATem\`atics 2013(3), 2013.

\end{thebibliography}
	\end{document}